\newtheorem{thm}{Theorem}[section]
\newtheorem{prop}[thm]{Proposition}
\newtheorem{lem}[thm]{Lemma}
\theoremstyle{remark}
\newtheorem{rmk}[thm]{Remark}
\theoremstyle{definition}
\newtheorem{constr}[thm]{Construction}
\newcommand{\lactobounce}{\Xi_{\mathrm{bounce}}}
\newcommand{\lactosteep}{\Xi_{\mathrm{steep}}}
\newcommand{\lactoposet}{\Xi_{\mathrm{poset}}}
\newcommand{\bouncetolac}{\Lambda_{\mathrm{bounce}}}
\newcommand{\steeptolac}{\Lambda_{\mathrm{steep}}}
\newcommand{\posettolac}{\Lambda_{\mathrm{poset}}}
\newcommand{\pointdyck}{\varphi}
\newcommand{\latdyck}{\psi}
\newcommand{\trees}{\mathcal{T}}
\newcommand{\posets}{\mathcal{P}}
\newcommand{\dyck}{\mathcal{D}}
\newcommand{\cat}{\mathrm{Cat}}
\newcommand{\Area}{\operatorname{Area}}
\newcommand{\tdef}[1]{\emph{#1}}
\newcommand{\insertfig}[2]{\includegraphics[page=#1,width=#2\textwidth]{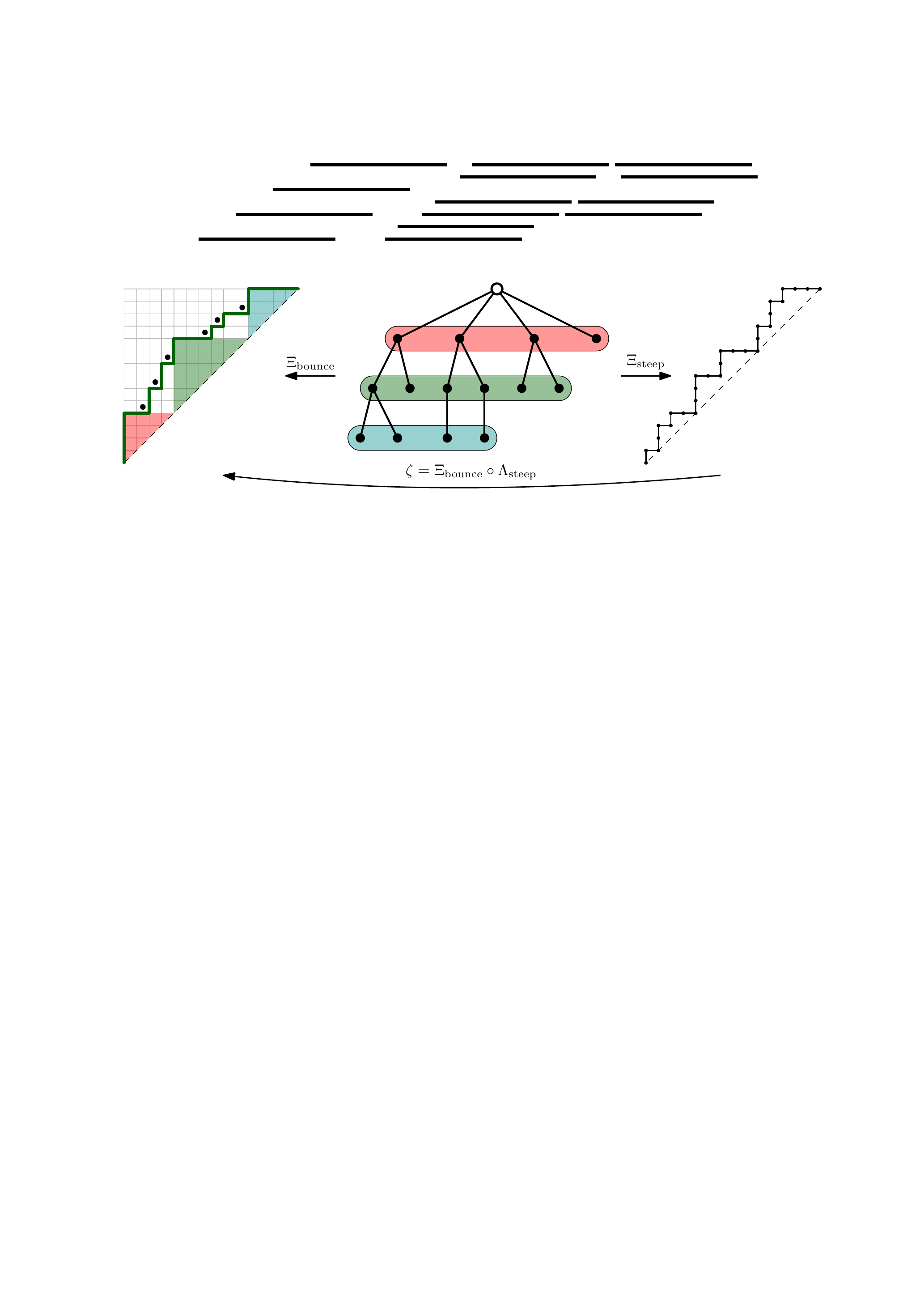}}
\author{Wenjie Fang}
\affiliation{LIGM, Univ Gustave Eiffel, CNRS, ESIEE Paris, Marne-la-Vallée, France}
\keywords{Dyck path, zeta map, unit interval poset, plane tree\vspace*{-0.2cm}}
\title{Bijective proof of a conjecture on unit interval posets}
\begin{document}

\publicationdata{vol. 26:2}{2024}{2}{10.46298/dmtcs.10837}{2023-01-20; 2023-01-20; 2024-01-15}{2024-01-30}

\maketitle

\begin{abstract}
  In a recent preprint, Matherne, Morales and Selover conjectured that two different representations of unit interval posets are related by the famous zeta map in $q,t$-Catalan combinatorics. This conjecture was proved recently by Gélinas, Segovia and Thomas using induction. In this short note, we provide a bijective proof of the same conjecture with a reformulation of the zeta map using left-aligned colored trees, first proposed in the study of parabolic Tamari lattices.
\end{abstract}

\section{Introduction}

The study of unit interval posets started in statistics \citep{poset-counting} and psychology \citep{scott1964measurement}. However, it has since been connected to other objects in algebraic combinatorics, such as $(3+1)$-free posets \cite{anti-adjacency} and positroids \cite{pointdyck}. The connection to $(3+1)$-free posets is particularly important, as these posets are at the center of the Stanley--Stembridge conjecture in \citet{stanley-stembridge}, which states that the chromatic symmetric function of the incomparability graph of a $(3+1)$-free poset has only positive coefficients when expanded in the basis of elementary symmetric functions. Unit interval posets thus receive some attention as it can be used to study the structure of $(3+1)$-free posets \citep*{anti-adjacency,tangle,graded-3+1}.

It is known in \cite{poset-counting} that unit interval posets are counted by Catalan numbers, and researchers have given two different bijections to represent a unit interval poset by Dyck paths \citep*{anti-adjacency,tangle}. In a recent preprint \citep*{conj-zeta}, it was conjectured that the two bijections are related by the famous zeta map in $q,t$-Catalan combinatorics \citep{Haglund-qt-catalan}, which was first given in \citep*{zeta-orig}. In this short note, we settle this conjecture using bijections (see \Cref{thm:main-zeta}), in contrast to a recent inductive proof of the same conjecture by \citet*{alt-proof}. To this end, we first introduce a bijection $\lactoposet$ between unit interval posets and plane trees. We then show that the two different known bijections from unit interval posets are conjugates of special cases of some bijections $\lactosteep$ and $\lactobounce$ from \citet*{cataland} constructed for the study of parabolic Tamari lattices. Using the link between $\lactosteep, \lactobounce$ and the zeta map established in the same article, we conclude our proof.

The rest of the article is organized as follows. In \Cref{sec:prelim} we give the basic definitions. Then we give our bijection between unit interval posets and plane trees in \Cref{sec:lac}, along with the related bijections from \citet*{cataland}. Finally in \Cref{sec:dyck} we recall the two known bijections between unit interval posets and Dyck paths, and then establish their link with the bijections in \Cref{sec:lac}, leading to a bijective proof of our main result (\Cref{thm:main-zeta}).

\paragraph{Acknowledgment} We thank Adrien Segovia for bringing this conjecture to our attention. We also thank the anonymous reviewers for their detailed and helpful comments. This work is not supported by any funding with precise predefined goal, but it is supported by the publicly funded laboratory LIGM of Université Gustave Eiffel.

\section{Preliminary} \label{sec:prelim}

For convenience, we write $[n]$ for the set $\{1, 2, \ldots, n\}$. We consider finite partially order sets (or \tdef{poset} for short) of the form $P = (P_{\mathrm{elem}}, \preceq)$, where $P_{\mathrm{elem}}$ is a finite set and $\preceq$ a partial order on $P_{\mathrm{elem}}$. Given a set $S$ of real numbers $x_1 < x_2 < \cdots < x_n$, we define a partial order $\preceq_S$ on $[n]$ by taking $i \preceq_S j$ if and only if $x_i + 1 < x_j$. The order $\preceq_S$ can also be seen as defined on intervals of unit length starting at the $x_i$'s, where $i \preceq_S j$ if the interval starting with $x_i$ is on the left of that with $x_j$ without overlap. A poset $P$ with $n$ elements is a \tdef{unit interval order} if there is some $S \subset \mathbb{R}$ with $n$ elements such $P \cong ([n], \preceq_S)$. In this case, we call $S$ a \tdef{starting set} of $P$. We sometimes represent unit interval orders as $([n], \preceq_S)$ for some $S$ hereinafter.

We have the following characterization of unit interval orders.

\begin{thm}[{\citet[Theorem~2.1]{scott1964measurement}}] \label{unit-interval-order-chara}
  A poset $P$ is a unit interval poset if and only if it is $(3+1)$-free and $(2+2)$-free, that is, the order induced on any four elements cannot be a chain of $3$ elements plus an incomparable element, or two disjoint chains each containing $2$ elements.
\end{thm}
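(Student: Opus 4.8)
The plan is to prove the two implications separately: the forward one ($\Rightarrow$) by a short arithmetic argument inside the real line, and the converse ($\Leftarrow$) by extracting from the two forbidden configurations enough order-theoretic structure to reconstruct a starting set.

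For the forward direction, suppose $P = ([n], \preceq_S)$ with $S = \{x_1 < \cdots < x_n\}$, so $i \preceq_S j$ exactly when $x_i + 1 < x_j$. If $a \prec b$ and $c \prec d$ induce a $(2+2)$, then the cross-incomparabilities give $x_d \le x_a + 1 < x_b$ (from $a \parallel d$ and $a \prec b$) and $x_b \le x_c + 1 < x_d$ (from $b \parallel c$ and $c \prec d$), which is absurd. If $a \prec b \prec c$ and $e$ is incomparable to all three, then $e \parallel a$ gives $x_e \le x_a + 1$ while $e \parallel c$ gives $x_e \ge x_c - 1$; combined with $x_c - 1 > x_b > x_a + 1$ this yields $x_e \ge x_c-1 > x_a+1 \ge x_e$, again absurd. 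Hence $P$ is $(3+1)$-free and $(2+2)$-free.

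For the converse, assume $P$ is $(3+1)$-free and $(2+2)$-free; write $D(x) = \{y : y \prec x\}$ and $U(x) = \{y : x \prec y\}$. The $(2+2)$-free condition is equivalent to the families $\{D(x)\}_{x}$ and $\{U(x)\}_{x}$ each being totally ordered by inclusion (incomparable down-sets $D(x), D(y)$ would, together with $x$ and $y$, induce a $(2+2)$, and conversely). Using $(3+1)$-freeness I will upgrade this to: the preorder defined by $x \preceq^{\ast} y$ iff $D(x) \subseteq D(y)$ and $U(x) \supseteq U(y)$ is \emph{total}. Indeed, since down-sets and up-sets are chains, totality can only fail when $D(x) \subsetneq D(y)$ while $U(x) \not\supseteq U(y)$; picking $z \in D(y) \setminus D(x)$ and $w \in U(y) \setminus U(x)$, one checks in turn that $x$ is incomparable to each of $z$, $y$, $w$ (each alternative would create a forced comparability $x \prec y$, $y \prec x$, $x \prec z$, or $w \prec x$ contradicting a previous step) while $z \prec y \prec w$, so $\{x,z,y,w\}$ induces a $(3+1)$, a contradiction. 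Moreover $x \prec y$ forces $x \preceq^{\ast} y$ strictly, so any linear order $v_1, \dots, v_n$ refining $\preceq^{\ast}$ is a linear extension of $P$; and along it each $D(v_j)$ is an initial segment $\{v_1, \dots, v_{f(j)}\}$ (if $v_k \prec v_j$ and $\ell < k$ then $U(v_\ell) \supseteq U(v_k) \ni v_j$), with $f$ nondecreasing and $f(j) < j$, so that $v_i \prec v_j$ iff $i \le f(j)$.

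It remains to realize this data by reals, which I expect to be the step requiring the most care. Proceed by induction on $n$ (with the convention $x_0 = -\infty$): suppose $x_1 < \cdots < x_{n-1}$ already satisfy $x_i + 1 < x_j \iff i \le f(j)$ for $i,j \le n-1$, with the bookkeeping invariant that no difference $x_i - x_j$ equals $1$ exactly. Since $v_n$ is $\preceq^{\ast}$-maximal, $U(v_n) = \emptyset$, so it suffices to choose $x_n$ in the interval $\bigl(\max(x_{n-1}, x_{f(n)} + 1),\ x_{f(n)+1} + 1\bigr]$, the upper bound being vacuous when $f(n) = n-1$. Nonemptiness reduces to $x_{n-1} < x_{f(n)+1} + 1$: otherwise (the equality case being excluded by the invariant) we would have $x_{f(n)+1} + 1 < x_{n-1}$, hence $v_{f(n)+1} \prec v_{n-1}$ and so $f(n)+1 \le f(n-1) \le f(n)$, impossible. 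Picking $x_n$ in this interval while preserving the genericity invariant, and setting $S = \{x_1, \dots, x_n\}$, gives $P \cong ([n], \preceq_S)$, completing the proof.
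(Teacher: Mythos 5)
The paper does not prove this statement at all: it is imported as a citation (Scott's theorem, stated as Theorem~2.1 of the cited reference) and used as a black box, so there is no in-paper proof to compare against. Your argument is a correct, self-contained proof of the Scott--Suppes characterization. The forward direction's interval arithmetic is right: in a $(2+2)$ you derive both $x_d < x_b$ and $x_b < x_d$, and in a $(3+1)$ you trap $x_e$ in an empty range. The converse is the classical ``trace'' argument: $(2+2)$-freeness makes the down-sets and up-sets chains under inclusion, $(3+1)$-freeness makes the preorder $\preceq^{\ast}$ total (your four-element configuration $\{x,z,y,w\}$ does induce a $(3+1)$, though you should verify $x\parallel y$ \emph{before} $x\parallel z$ and $x\parallel w$, since those two cases reduce to $x\prec y$ or $y\prec x$), and then each $D(v_j)$ being an initial segment $\{v_1,\dots,v_{f(j)}\}$ with $f$ nondecreasing and $f(j)<j$ is exactly the combinatorial data of a unit interval representation. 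The inductive placement of $x_n$ in $\bigl(\max(x_{n-1},x_{f(n)}+1),\,x_{f(n)+1}+1\bigr]$ is correct, and your nonemptiness argument via $f(n-1)\le f(n)$ together with the no-difference-equal-to-one invariant closes the only delicate point (the paper's own \Cref{constr:pointdyck} assumes the analogous genericity $S\cap S^+=\varnothing$ can be arranged, which your invariant also delivers). The proof buys the paper nothing it needs --- the citation suffices --- but it is sound and could stand as an appendix-level verification.
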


Unit interval orders are counted by Catalan numbers.

\begin{prop}[\citet{poset-counting}] \label{prop:poset-cat}
  The number of unit interval posets with $n$ elements is the $n$-th Catalan number $\cat_n = \frac1{2n+1} \binom{2n+1}{n}$.
\end{prop}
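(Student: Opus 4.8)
The plan is to exhibit an explicit bijection between isomorphism classes of unit interval posets on $n$ elements and weakly increasing integer sequences $(k_1, \ldots, k_n)$ satisfying $i \le k_i \le n$, and then to recognize the latter as a classical Catalan family. (Alternatively, once the bijection $\lactoposet$ of \Cref{sec:lac} between unit interval posets and plane trees is available, the statement follows at once from the classical fact that plane trees with $n$ edges are counted by $\cat_n$; but a self-contained argument seems preferable at this point.)

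First I would put a unit interval poset into a canonical labelled form. Given such a poset $P$ on $n$ elements together with a starting set $S = \{x_1 < \cdots < x_n\}$ realizing it, label the elements $1, \ldots, n$ following the left-to-right order of the corresponding unit intervals, so that $i \prec j \iff x_i + 1 < x_j$. Elementary interval arithmetic shows that in this labelling $i \prec j$ forces $i < j$, that $\{j : i \prec j\}$ is an up-set of $[n]$ for every $i$, and that $\{i : i \prec j\}$ is a down-set of $[n]$ for every $j$; call a partial order on $[n]$ with these properties \emph{natural}. Equivalently, a natural order is precisely one of the form $i \prec j \iff j > k_i$ for a weakly increasing sequence $(k_1, \ldots, k_n)$ with $i \le k_i \le n$, namely $k_i = \max\{j : i \not\prec j\}$. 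The heart of the proof is the assertion that each isomorphism class of unit interval posets contains exactly one natural order: existence is immediate from the construction just described, while for uniqueness one argues that in a natural order the elements are forced into position by the (isomorphism-invariant) inclusion order on their principal down-sets and up-sets, the only remaining freedom being the permutation of "twin" elements with identical down-sets and up-sets, which does not alter the relation. This uniqueness statement is where I expect the real work — and the genuine use of the structure of unit interval posets — to lie.

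It then remains to check invertibility and to count. Conversely to the construction above, any weakly increasing sequence $(k_1, \ldots, k_n)$ with $i \le k_i \le n$ defines a relation on $[n]$ by declaring $i \prec j$ exactly when $j > k_i$; one verifies directly that this is a strict partial order which is both $(2+2)$-free and $(3+1)$-free, hence a unit interval poset by \Cref{unit-interval-order-chara}, whose canonical natural form returns $(k_i)$. Finally, the substitution $c_i := n + 1 - k_{n+1-i}$ identifies these sequences with the classical Catalan family of weakly increasing sequences $(c_1, \ldots, c_n)$ with $1 \le c_i \le i$, of which there are $\cat_n = \frac{1}{2n+1}\binom{2n+1}{n}$. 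The partial-order axioms and the $(2+2)$- and $(3+1)$-freeness checks are routine and the final enumeration is classical, so the only genuine obstacle is the uniqueness of the natural representative.
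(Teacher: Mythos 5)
Your argument is correct in outline, but note first that the paper does not prove this proposition at all: it is imported by citation from \citet{poset-counting}, so there is no internal proof to match, and any self-contained argument is necessarily ``a different route.'' Your route --- canonically relabelling the elements by the left-to-right order of their starting points, characterizing the resulting \emph{natural} orders as exactly those of the form $i \prec j \iff j > k_i$ for a weakly increasing sequence $(k_1,\ldots,k_n)$ with $i \le k_i \le n$, using \Cref{unit-interval-order-chara} to see that every such sequence arises, and reflecting to the classical family $1 \le c_i \le i$ --- is essentially the standard enumeration of semiorders, and the routine verifications (partial-order axioms, $(2+2)$- and $(3+1)$-freeness, the final bijection to a Catalan family) all check out. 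You are also right to reject the shortcut through $\lactoposet$: the paper's proof of \Cref{prop:lac-poset-bij} itself invokes $|\posets_n| = \cat_n$ to avoid proving surjectivity, so deriving the count from that bijection would be circular within this paper. The only place where your write-up is a sketch rather than a proof is the step you flag yourself: uniqueness of the natural representative in each isomorphism class. The idea you indicate does go through --- writing $D(x) = \{y : y \prec x\}$ and $U(x) = \{y : x \prec y\}$, in a natural order both $|D(i)|$ and $|U(i)|$ are weakly increasing in $i$, so the multiset of pairs $(|D(x)|, |U(x)|)$, which is an isomorphism invariant, is a chain in the componentwise order and admits a unique weakly increasing arrangement; this pins down the sequence $(k_i)$ and hence the relation, the only freedom being the permutation of twins with equal pairs, which leaves the relation unchanged --- but this is precisely where the structure of unit interval posets enters, and it still needs to be written out for the proof to be complete.
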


By definition, we may represent a unit interval poset $P$ by a set of real numbers $S$, though the choice of $S$ is clearly not unique. In the following, for convenience, we use this perspective of (starting points of) intervals, which is arguably easier to manipulate. We denote by $\posets_n$ the set of unit interval posets with $n$ elements.

There are many other families of combinatorial objects counted by Catalan numbers, such as Dyck paths and plane trees. A \tdef{Dyck path} is a lattice path formed by north steps $\uparrow \; = (0, 1)$ and east steps $\rightarrow \; = (1, 0)$, starting at $(0, 0)$ and ending on the diagonal $y = x$ while staying weakly above it. The \tdef{size} of a Dyck path is the number of its north steps. We denote by $\dyck_n$ the set of Dyck paths of size $n$. We define plane trees recursively: a \tdef{plane tree} $T$ is either a single node (a \tdef{leaf}) or an \tdef{internal node} $u$ linked by edges to a sequence of plane trees called \tdef{sub-trees}. In the latter case, the node $u$ is also called the \tdef{root} of $T$, and the roots of sub-trees are called \tdef{children} of $u$. We denote by $\trees_n$ the set of plane trees with $n$ non-root nodes. We recall the well-known fact that $|\trees_n| = |\dyck_n| = \cat_n$. For a node $u$ in a plane tree $T$, its \tdef{depth}, denoted by $d(u)$, is the distance (number of edges) between $u$ and the root of $T$.

\section{Unit interval posets and plane trees} \label{sec:lac}

We start by a new bijection between unit interval posets and plane trees.

\begin{constr} \label{constr:posettolac}
  Let $S = \{x_1 < \cdots < x_n\}$ be a starting set of a unit interval poset $P = ([n], \preceq_S)$. We define a plane tree $T$ as follows (see \Cref{fig:posettolac} for an example). We set $x_0 = x_1 - 2$. We denote by $v_i$ the node of $T$ corresponding to $x_i$. For $i \in [n]$, the parent of $v_i$ is $v_j$ if and only if $j$ is the largest index such that $j \preceq_S i$. By the definition of $x_0$, the parent of each $v_i$ is well-defined, and all nodes are descendants of $x_0$. We then order children of the same node from left to right with \emph{decreasing} index. We thus conclude that $T$ is a well-defined plane tree. We note that $T$ depends only on $\preceq_S$. We define $\posettolac(P) = T$.
\end{constr}

\begin{figure}
  \centering
  \insertfig{2}{1}
  \caption{Example of $\posettolac$ from a unit interval poset defined by a set of unit intervals to a plane tree}
  \label{fig:posettolac}
\end{figure}

\begin{constr} \label{constr:lactoposet}
  Given a plane tree $T$ with $n$ non-root nodes, we define an order relation $\preceq_S$ on $[n]$ induced by some $S \subset \mathbb{R}$. Let $r_T$ be the root of $T$, and $m$ the maximal arity of $T$, \textit{i.e.}, the maximal number of children of nodes in $T$. Given a node $u$ in $T$, if it is the $i$-th child of its parent \emph{from right to left}, then we define $c(u) = i$. We observe that $1 \leq c(u) \leq m$. For a non-root node $u$ in $T$, let $u_0 = r_T, u_1, \ldots, u_{d(u)} = u$ be the nodes on the unique path from the root $u_0 = r_T$ to $u_{d(u)} = u$, where $d(u)$ is the depth of $u$. We then define a real number $x_u$ associated to $u$ using base $(m+2)$:
  \begin{equation}
    \label{eq:node-number}
    x_u = d(u) + (0.c(u_1)c(u_2) \cdots c(u_{d(u)}))_{(m+2)} = \ell + \sum_{i=1}^{d(u)} c(u_i) (m+2)^{-i}.
  \end{equation}
  It is clear from \eqref{eq:node-number} that $x_u$ is strictly decreasing from left to right for nodes of the same depth in $T$. Let $S$ be the set of $x_u$ for non-root nodes $u$ in $T$. We define $\lactoposet(T) = ([n], \preceq_S)$.
\end{constr}

As an example of \Cref{constr:lactoposet}, let $T$ be the tree on the right part of \Cref{fig:posettolac}, with nodes labeled as in the figure, and $S$ the set of numbers in the construction. The maximal arity of $T$ is $m = 4$, achieved by the root. The number $x_{v_{12}} \in S$ for the node $v_{12}$ is thus $x_{v_{12}} = 3 + (0.321)_6 = 769/216$, as $v_{12}$ is of depth $3$, and it is the first child \emph{from right to left} of its parent $v_8$, itself the second child \emph{from right to left} of its parent $v_3$, which is the third child \emph{from right to left} of the root.

The following lemma gives a direct connection between a plane tree and its corresponding unit interval poset obtained from $\lactoposet$.

\begin{lem} \label{lem:parent-cond}
  For $T \in \trees_n$, we take $S$ to be the set constructed in \Cref{constr:lactoposet}, and $x_u$ the number constructed from a non-root node $u$ of $T$. Then for non-root nodes $u, v$, the parent of $u$ in $T$ is $v$ if and only if $x_v$ is the largest element in $S$ smaller than $1 + x_u$.
\end{lem}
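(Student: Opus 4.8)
The plan is to unpack the base-$(m+2)$ expansion in \eqref{eq:node-number} and read off exactly what "the largest element of $S$ smaller than $1 + x_u$" means combinatorially. Fix a non-root node $u$ with root-to-$u$ path $u_0 = r_T, u_1, \ldots, u_d = u$ where $d = d(u)$, so that
\[
x_u = d + \sum_{i=1}^{d} c(u_i)(m+2)^{-i}.
\]
Then $1 + x_u = (d+1) + \sum_{i=1}^{d} c(u_i)(m+2)^{-i}$, and the integer part of $1+x_u$ is $d+1$ (the fractional part is a base-$(m+2)$ string of digits each between $1$ and $m$, hence strictly between $0$ and $1$). First I would observe that the integer part of $x_w$ equals $d(w)$ for every non-root node $w$, and that among nodes of a fixed depth $\ell$ the value $x_w$ is determined by the digit string $c(w_1)\cdots c(w_\ell)$, compared lexicographically — this is already noted right after \eqref{eq:node-number}, and it says precisely that $x_w < x_{w'}$ for same-depth $w, w'$ iff the path to $w$ lies to the right of the path to $w'$ in the left-to-right order on $T$.

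The core step is then a case analysis on the depth of the candidate node $v$. If $v = u_{d-1}$, the parent of $u$, then $x_v = (d-1) + \sum_{i=1}^{d-1} c(u_i)(m+2)^{-i}$, so $1 + x_u - x_v = 1 + c(u_d)(m+2)^{-d} > 1$, wait — I need $x_v < 1 + x_u$, which holds, and I must show no element of $S$ lies strictly between $x_v$ and $1+x_u$. Any $w \in S$ with $x_w \geq x_v$ and $x_w < 1 + x_u$ has integer part either $d-1$ or $d$. If $d(w) = d-1$ then $x_w \geq x_v$ forces (by the lexicographic comparison) that the path to $w$ is weakly left of the path to $v$; combined with $x_w < 1+x_u < x_v + 1$ one checks the digit strings must agree, so $w = v$. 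If $d(w) = d$, then $x_w < 1 + x_u$ means the digit string of $w$ is lexicographically $\leq$ that of $u$; but $x_w > x_v$ forces the length-$(d-1)$ prefix $c(w_1)\cdots c(w_{d-1})$ to be lexicographically $>$ that of $v$'s string, i.e. $\geq c(u_1)\cdots c(u_{d-1}) + $ one unit in the last digit, which pushes $x_w \geq x_v + (m+2)^{-(d-1)} > x_u + 1$ after accounting for the last digit range — contradiction. Conversely, if $v$ is \emph{not} the parent of $u$, I exhibit an element of $S$ in the gap, namely the parent $u_{d-1}$ itself falls in $(x_v, 1+x_u)$ whenever $x_v < x_{u_{d-1}}$, and a symmetric argument handles $x_v > x_{u_{d-1}}$ using either $u_{d-1}$ or a suitable depth-$d$ node, so no such $v$ can be the claimed maximizer.

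The main obstacle I expect is the bookkeeping in the "no element strictly in between" direction: one must be careful that the digits $c(\cdot)$ range over $\{1, \ldots, m\}$ rather than $\{0, \ldots, m+1\}$, so the fractional parts never get too close to $0$ or $1$, and this is exactly why base $m+2$ (rather than $m$ or $m+1$) is the right choice — it leaves a buffer digit on each side guaranteeing that a change in the depth-$(d-1)$ prefix always moves $x_w$ by more than the total spread of the depth-$d$ fractional tails. I would isolate this as a short sub-claim: for non-root $w, w'$ with $d(w') = d(w) - 1$, one has $x_w < 1 + x_{w'}$ iff $w'$ is an ancestor of $w$ or lies strictly to the right of the ancestor of $w$ at depth $d(w')$; granting this sub-claim, \Cref{lem:parent-cond} follows by taking $w = u$ and noting that the \emph{largest} such $x_{w'}$ is attained exactly at $w' = u_{d-1}$, the parent of $u$. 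The rest is the routine estimate above.
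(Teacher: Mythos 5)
Your overall strategy --- reading off the depth from the integer part of $x_w$ and the left-to-right position from the base-$(m+2)$ fractional part, then using a spacing estimate --- is the same as the paper's, but the key inequality points the wrong way throughout, and this makes several of your central claims false as written. The condition that actually characterizes the parent (see \Cref{constr:posettolac}: the parent of $v_i$ is $v_j$ for the \emph{largest} $j$ with $j \preceq_S i$, i.e.\ $x_j + 1 < x_i$) is that $x_v$ is the largest element of $S$ with $x_v + 1 < x_u$, that is, the largest element below $x_u - 1$; this is also the form in which the lemma is invoked in \Cref{prop:pointdyck-bounce} and \Cref{prop:latdyck-steep}. Reading the condition as ``largest element below $1+x_u$'' cannot be correct: $x_u$ itself belongs to $S$ and satisfies $x_v < x_u < 1 + x_u$, so your claim that ``no element of $S$ lies strictly between $x_v$ and $1+x_u$'' is false, and your restriction of the competing depths to $d-1$ and $d$ also fails (a depth-$(d+1)$ node whose depth-$d$ ancestor lies to the right of $u$ also has $x_w < 1 + x_u$). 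Your concluding sub-claim is likewise backwards: for $d(w') = d(w) - 1$, the inequality $x_w < 1 + x_{w'}$ amounts to comparing fractional parts, and it holds exactly when $w'$ lies strictly to the \emph{left} of the depth-$(d(w)-1)$ ancestor of $w$ --- not when $w'$ is that ancestor or to its right, which is instead the correct characterization of $x_{w'} + 1 < x_w$.

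Once the inequality is flipped, your plan does go through and essentially reproduces the paper's argument, in a form that is shorter than your lexicographic case analysis: with $v'$ the parent of $u$ and $d = d(u)$, equation \eqref{eq:node-number} gives $x_u - x_{v'} = 1 + c(u_d)(m+2)^{-d}$, hence $0 < x_u - 1 - x_{v'} < (m+2)^{-d+1}$; any $x_v \in S$ with $x_{v'} \le x_v < x_u - 1$ therefore satisfies $0 \le x_v - x_{v'} < (m+2)^{-d+1}$, which forces $d(v) = d-1$ by comparing integer parts and then $v = v'$ because two distinct depth-$(d-1)$ values differ by at least $(m+2)^{-(d-1)}$. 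I recommend rebuilding the proof around that single two-sided estimate, and in any case checking the direction of the inequality on a small example such as the node $v_{12}$ in \Cref{fig:posettolac}.
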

\begin{proof}
  Suppose that $T$ has maximal arity $m$. Let $v'$ be the parent of $u$. By \eqref{eq:node-number}, we have $1 < x_u - x_{v'} < 1 + (m+2)^{-\ell+1}$, with $\ell$ the distance from $v'$ to the root. Then, $x_v$ satisfying $x_{v'} \leq x_v < 1 + x_u$ means $0 \leq x_v - x_{v'} < (m+2)^{-\ell+1}$, which means $v = v'$ by \eqref{eq:node-number}, as it requires $v$ and $v'$ to have the same distance to the root, but in this case $|x_v - x_{v'}|$ cannot be smaller than $(m+2)^{-\ell+1}$ if $v \neq v'$. We thus have the equivalence.
\end{proof}

We now show that $\posettolac$ and $\lactoposet$ are bijections and are inverse to each other.

\begin{prop} \label{prop:lac-poset-bij}
  For all $n \geq 1$, the map $\posettolac$ is a bijection from $\posets_n$ to $\trees_n$, with $\lactoposet$ its inverse.
\end{prop}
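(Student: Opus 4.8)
The plan is to show that $\lactoposet$ and $\posettolac$ are mutually inverse as maps between $\posets_n$ and $\trees_n$; since both sets have the same finite cardinality $\cat_n$ by \Cref{prop:poset-cat} and the well-known fact $|\trees_n| = \cat_n$, it suffices to prove that one composition is the identity, say $\posettolac \circ \lactoposet = \id_{\trees_n}$, from which bijectivity of both maps and $\lactoposet \circ \posettolac = \id_{\posets_n}$ follow automatically. (One could instead verify both compositions directly; I expect checking one, together with a cardinality count, to be cleanest.)

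First I would fix $T \in \trees_n$, let $\preceq_S = \lactoposet(T)$ with $S = \{x_u : u \text{ non-root}\}$ the set from \Cref{constr:lactoposet}, and then apply \Cref{constr:posettolac} to this $S$. The crucial observation is that \Cref{lem:parent-cond} already identifies, for each non-root node $u$, exactly which element of $S$ is the largest one below $1 + x_u$: it is $x_v$ where $v$ is the parent of $u$ in $T$. Meanwhile, by \Cref{constr:posettolac}, the parent of the node corresponding to $x_u$ in the tree $\posettolac(\preceq_S)$ is the node corresponding to $x_j$ where $j$ is the largest index with $x_j \preceq_S x_u$, i.e. $x_j + 1 < x_u$, i.e. $x_j$ is the largest element of $S$ strictly less than $x_u - 1$ — equivalently, since all $x_u$ are distinct reals and $x_j \neq x_u - 1$ can be arranged (indeed $x_u - 1 \notin S$ by the base-$(m+2)$ expansions, as the fractional parts match but the integer parts differ by exactly $1$, so no other element sits there), $x_j$ is the largest element of $S$ with $x_j < 1 + x_u$. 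By \Cref{lem:parent-cond} this is precisely $x_v$ with $v$ the $T$-parent of $u$. Hence the parent relation in $\posettolac(\lactoposet(T))$ coincides with that of $T$, so the two trees have the same underlying rooted tree structure.

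It remains to check that the left-to-right order of children agrees. In $\posettolac$, children of a common node are ordered by \emph{decreasing} index of the associated real numbers, i.e. by decreasing $x$-value (since the indexing in \Cref{constr:posettolac} is by increasing real value); in $T$ itself, \Cref{constr:lactoposet} notes right after \eqref{eq:node-number} that $x_u$ is strictly decreasing from left to right among nodes of the same depth, and in particular among siblings. So ordering siblings by decreasing $x$-value reproduces their left-to-right order in $T$. Together with the parent-relation identity, this shows $\posettolac(\lactoposet(T)) = T$ as plane trees. Combined with $|\posets_n| = |\trees_n|$, we conclude both maps are bijections and inverse to each other.

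The only genuinely delicate point is the translation step: matching the condition ``$x_j$ largest with $x_j + 1 < x_u$'' used in $\posettolac$ against the condition ``$x_v$ largest with $x_v < 1 + x_u$'' appearing in \Cref{lem:parent-cond}. These differ by whether the value $x_u - 1$ itself could lie in $S$; I would dispatch this by the remark that, for distinct non-root nodes $a, b$ of $T$, one never has $x_a - x_b = 1$ exactly — from \eqref{eq:node-number}, equality of fractional parts in base $(m+2)$ forces $a$ and $b$ to have the same depth, but same-depth nodes have distinct $x$-values with fractional parts differing in some digit, contradicting an integer difference of $1$. (This is essentially the same estimate already used inside the proof of \Cref{lem:parent-cond}.) With that settled, the rest is bookkeeping.
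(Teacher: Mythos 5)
Your overall strategy is exactly the paper's: use $|\posets_n| = \cat_n = |\trees_n|$ to reduce to showing $\posettolac \circ \lactoposet = \id$, recover the parent relation from \Cref{lem:parent-cond} together with \Cref{constr:posettolac}, and recover the sibling order from the fact that $x_u$ decreases from left to right among siblings. That skeleton is correct and is all the paper itself does.

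However, the ``delicate point'' you single out is handled incorrectly. From \Cref{constr:posettolac}, the parent of the node for $x_u$ is the node for the largest $x_j$ with $x_j + 1 < x_u$, i.e.\ $x_j < x_u - 1$; you then assert this is ``equivalently'' the largest $x_j$ with $x_j < 1 + x_u$, i.e.\ $x_j < x_u + 1$, and justify the passage by arguing that $x_u - 1 \notin S$. This is not an equivalence: the two thresholds differ by a shift of $2$, not by a boundary case, and the largest element of $S$ below $x_u + 1$ is at least $x_u$ itself, never the parent. The observation $x_u - 1 \notin S$ (which you do prove correctly) only distinguishes $x_j < x_u - 1$ from $x_j \le x_u - 1$ and cannot bridge that gap. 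The repair is immediate, though: what \Cref{lem:parent-cond} actually establishes --- as its proof and its later uses show (e.g.\ in \Cref{prop:latdyck-steep} it yields $x_v \le x_{w'}$ from $x_v + 1 < x_w$) --- is that the parent $v$ of $u$ in $T$ is characterized by $x_v$ being the largest element of $S$ satisfying $x_v + 1 < x_u$; the ``$1+$'' belongs with $x_v$, not with $x_u$. That is verbatim the condition of \Cref{constr:posettolac}, so the two parent relations coincide with no translation step needed. With that one step replaced, your proof is complete and coincides with the paper's.
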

\begin{proof}
  By \Cref{prop:poset-cat}, we have $|\posets_n| = \cat_n = |\trees_n|$. We thus only need to show that $\posettolac \circ \lactoposet = \operatorname{id}$. Given a plane tree $T \in \trees_n$, let $P = ([n], \preceq_S) = \lactoposet(T)$ and $T' = \posettolac(P)$. Let $S = \{ x_1 < \cdots < x_n \}$ be the set of real numbers constructed in \Cref{constr:lactoposet} for $\preceq_S$. Given $i \in [n]$, let $u_i$ be the node in $T$ that gives rise to $x_i$ in $S$, and $u'_i$ the node in $T'$ that represents $x_i$. By \Cref{lem:parent-cond} and \Cref{constr:posettolac}, for any $i, j \in [n]$, the node $u_i$ is the parent of $u_j$ in $T$ if and only if $u'_i$ is the parent of $u'_j$ in $T'$. Then, $T'$ has the same order of siblings as $T$, as in both we order siblings with decreasing order in their corresponding real numbers in $S$. We thus conclude that $T = T'$.
\end{proof}

We now restate two previously known bijections between plane trees and Dyck paths that will be used to prove our main result.

\begin{constr} \label{constr:lactosteep}
  Let $n \geq 1$ and $T \in \trees_n$, we construct a Dyck path $D$ by taking the \emph{clockwise} contour walk starting from the top of the root of $T$ (see the Dyck path on the right of \Cref{fig:zeta-lac}). During the walk, when we pass an edge for the first (resp. second) time, we append $\uparrow$ (resp. $\rightarrow$) to $D$. We define $\lactosteep(T) = D$. The map $\lactosteep$ is a bijection from $\trees_n$ to $\dyck_n$ for all $n \geq 1$, and we denote its inverse by $\steeptolac$.
\end{constr}

\begin{constr} \label{constr:lactobounce}
  Let $n \geq 1$ and $T \in \trees_n$, we construct a Dyck path $D$ by appointing the number of north steps at each $x$-coordinate (see the Dyck path on the left of \Cref{fig:zeta-lac}). Let $\alpha_\ell$ be the number of nodes of depth $\ell$ (thus distance $\ell$ to the root), and $\ell_{\max}$ the maximal depth of nodes in $T$. We take $s_\ell = \alpha_1 + \cdots + \alpha_\ell$. Given $1 \leq k \leq n - 1$, there is a unique way to write $k = s_\ell - r$ with $0 \leq r < \alpha_{\ell}$ and $1 \leq \ell \leq \ell_{\max}$. In this case, the number of north steps in $D$ on $x = k$ is the number of children of the $(r+1)$-st node of depth $\ell$. The number of north steps in $D$ on $x = 0$ is the number of children of the root. We define $\lactobounce(T) = D$. The map $\lactobounce$ is a bijection from $\trees_n$ to $\dyck_n$ for all $n \geq 1$, see \citet*[Lemma~3.18]{cataland}, and we denote its inverse by $\bouncetolac = \lactobounce^{-1}$.
\end{constr}

\begin{prop}[{\citet*[Theorem~IV]{cataland}}] \label{prop:zeta-map}
  Let $\zeta$ be the zeta map from $\dyck_n$ to $\dyck_n$ with $n \geq 1$. We have $\zeta = \lactobounce \circ \steeptolac$.
\end{prop}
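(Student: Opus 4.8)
The plan is to verify the identity pointwise by unfolding both sides on an arbitrary plane tree: one shows $\zeta(\lactosteep(T)) = \lactobounce(T)$ for every $T \in \trees_n$, which is equivalent to the claim since $\steeptolac = \lactosteep^{-1}$. I would work from the classical \emph{diagonal} description of the zeta map. Recall that a Dyck path $D$ of size $n$ carries an area vector $(a_1, \dots, a_n)$, where $a_i$ is the number of full unit cells in the $i$-th row lying between $D$ and the diagonal (so $a_1 = 0$, $a_{i+1} \le a_i + 1$, and $a_i = y - x$ for the $i$-th north step, read from the bottom, which starts at $(x,y)$). In these terms $\zeta(D)$ is obtained by emitting the north steps of $D$ grouped by their value $a_i$, scanning the groups in the order of increasing $a_i$ and, within a group, in the order the steps occur in $D$; the east steps are then inserted so that the east step closing a north step at level $i$ is placed immediately before the block of level-$(i+1)$ north steps that sit directly above it.

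The first step is to translate Construction~\ref{constr:lactosteep} into this language. The clockwise contour walk is a bijection under which, scanning $D = \lactosteep(T)$ from the start, the $i$-th north step of $D$ corresponds to the $i$-th non-root node $u_i$ of $T$ in the depth-first order induced by the walk, and the value $a_i$ of that step equals $d(u_i) - 1$; moreover the east step following a maximal run of north steps of $D$ records backtracking past the last descendant of some node. Hence ordering the north steps of $D$ by level, ties by position, is the same as ordering the non-root nodes of $T$ first by depth and then by contour order, i.e.\ a breadth-first traversal of $T$ in which each level is read in the order fixed by the walk; and the level-$(i+1)$ north steps sitting above the level-$i$ north step of a node $u$ are exactly the children of $u$, in that same order. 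Carrying this through, $\zeta(D)$ is the Dyck path that reads the nodes of $T$ in this breadth-first order (root first) and emits, for each node $u$, a run of north steps of length equal to the number of children of $u$, the runs separated by single east steps (one east step for each of the $n$ non-root nodes, placed just after the corresponding run). This is precisely $\lactobounce(T)$: by Construction~\ref{constr:lactobounce} the number of north steps of $\lactobounce(T)$ at abscissa $0$ is the number of children of the root, and at abscissa $k \ge 1$ it is the number of children of the $(k+1)$-st non-root node in the same breadth-first order, with a trailing east step after each abscissa. Since a Dyck path of size $n$ is determined by the number of north steps at each abscissa $0, 1, \dots, n-1$, the two paths coincide.

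The main obstacle I expect is not this combinatorial picture but making the bookkeeping airtight: the clockwise-versus-counterclockwise convention in Construction~\ref{constr:lactosteep}, the left-to-right-versus-right-to-left indexing of siblings and of same-depth nodes implicit in Construction~\ref{constr:lactobounce}, and the tie-breaking rule in the diagonal description of $\zeta$ must all be aligned so that ``breadth-first order'' denotes literally the same list of nodes on the two sides; the boundary cases -- the root's contribution, the initial run of north steps, and the final east step -- are where off-by-one errors would creep in. A small-size sanity check (e.g.\ $n=2,3$) is a cheap way to pin down the conventions before writing the general argument. Since the statement is in any case \citet*[Theorem~IV]{cataland}, where exactly this verification (together with Lemma~3.18 there) is carried out, for the purposes of this note it suffices to cite that result.
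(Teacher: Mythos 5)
The paper gives no proof of this proposition at all---it is stated as an imported result with only the citation to \citet*[Theorem~IV]{cataland}---so your bottom line, that citing that theorem suffices, is exactly the paper's treatment. The verification you sketch on top of the citation is a correct outline of the argument carried out in the cited source: identifying the north steps of $\lactosteep(T)$ with the non-root nodes of $T$ in clockwise contour order (so that the $i$-th north step has area $d(u_i)-1$) and observing that the level-by-level reading in $\zeta$ regroups the nodes by depth, within each depth in right-to-left order, with one east step emitted per node just before the run recording its children, which is precisely $\lactobounce(T)$; the only blemish is a harmless off-by-one at the end, since by \Cref{constr:lactobounce} the number of north steps of $\lactobounce(T)$ at abscissa $k\ge 1$ is the number of children of the $k$-th (not the $(k+1)$-st) non-root node in that breadth-first order.
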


See \Cref{fig:zeta-lac} for an illustration of \Cref{prop:zeta-map}. Here, we do not give the original definition of the zeta map $\zeta$ in \citet*{zeta-orig}, and will instead be using \Cref{prop:zeta-map} in the following, as it is better suited to our approach.

\begin{figure}
  \centering
  \insertfig{3}{0.85}
  \caption{The zeta map as composition of bijections mediated by trees. The nodes of the same depth of the tree are grouped together. For the Dyck path on the left, the number of north steps on $x = k$ is the number of children of the $k$-th node in the tree, ordered by increasing depth, then from right to left. The one on the right comes from a clockwise contour walk.}
  \label{fig:zeta-lac}
\end{figure}

\begin{rmk}
  Albeit the same notation, the maps $\lactosteep$ and $\lactobounce$ are only special cases of the ones from \citet*[Construction~3.10~and~Equation~14]{cataland}. More precisely, the central objects of \citet*{cataland} are called \emph{LAC trees}, which are plane trees with non-root nodes divided into regions algorithmically. We are only using the special case where the region $d$ consists of nodes of depth $d$, which is also the one used in \citet*[Section~3.3]{cataland} to prove \Cref{prop:zeta-map}. In full generality, the map $\lactosteep$ maps a LAC tree to a \emph{steep pair}, which is a pair of nested Dyck paths with a \emph{steep} upper path, \textit{i.e.},without two consecutive east steps except on the maximal height. The map $\lactobounce$ sends a LAC tree to a \emph{bounce pair}, which is a pair of nested Dyck path with a \emph{bounce} lower path, \textit{i.e.}, a concatenation of sub-paths of the form $\uparrow^k \rightarrow^k$ for all $k$. This explains the words ``steep'' and ``bounce'' in their notation.
\end{rmk}

\begin{rmk}
  The bijection $\lactosteep$ is classical, except that we do the contour walk from right to left instead of from left to right. The bijection $\lactobounce$ is quite close to the classical bijection between plane trees and Łukasiewicz words, as given in \citet[Section~I.5.3]{flajolet}), which can be seen as sequences $(z_0, z_1, \ldots, z_{n-1})$ with $z_i \geq -1$ such that $\sum_{i=0}^k z_i \geq 0$ for $0 \leq k < n - 1$, and the sum of all $z_i$'s is $-1$. However, in the classical bijection of Łukasiewicz, we deal with nodes in a depth-first order, but in $\lactobounce$ they are processed in a breadth-first order.
\end{rmk}

\section{Unit interval posets and the zeta map} \label{sec:dyck}

In the following, we give the two representations of unit interval posets as Dyck paths, and we detail how they are related to the bijections detailed in \Cref{sec:lac}, leading to a proof of our main result (\Cref{thm:main-zeta}).

We start by the first representation, which was first defined implicitly using anti-adjacency matrices of unit interval posets in \cite{anti-adjacency}, only involving the poset structure. The following form was first given in \cite{pointdyck}.

\begin{constr} \label{constr:pointdyck}
  Given $P = ([n], \preceq_S)$ a unit interval poset with $S \subset \mathbb{R}$, we define a Dyck path $D$ as follows. Let $S^+ = \{x + 1 \mid x \in S\}$. Without loss of generality, we can choose $S$ such that $S \cap S^+ = \varnothing$. Then suppose that $S \cup S^+ = \{ y_1 < \cdots < y_{2n} \}$. For $i \in [2n]$, the $i$-th step of $D$ is $\uparrow$ if $y_i \in S$, and is $\rightarrow$ otherwise. We define $\pointdyck(P) = D$. See \Cref{fig:pointdyck-latdyck} for an example.
\end{constr}

This form of the map $\pointdyck$ in \Cref{constr:pointdyck} was given in \citet[Section~5]{pointdyck} and proved in Lemma~5.7 therein to be equivalent to the original definition, which is the one used in \citet*[Section~2.3]{conj-zeta}. Although defined here using the set $S$, the map $\pointdyck$ does not depend on the choice of $S$.

\begin{figure}
  \centering
  \insertfig{4}{0.8}
  \caption{Example of the bijections $\pointdyck$ and $\latdyck$}
  \label{fig:pointdyck-latdyck}
\end{figure}

\begin{prop} \label{prop:pointdyck-bounce}
  We have $\pointdyck \circ \lactoposet = \lactobounce$, and $\pointdyck$ is a bijection from $\posets_n$ to $\dyck_n$ for all $n \geq 1$.
\end{prop}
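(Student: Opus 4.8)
The plan is to establish the identity $\pointdyck\circ\lactoposet=\lactobounce$ directly and to deduce everything else from it: since $\lactoposet$ is a bijection $\trees_n\to\posets_n$ with inverse $\posettolac$ by \Cref{prop:lac-poset-bij} and $\lactobounce$ is a bijection $\trees_n\to\dyck_n$ by \Cref{constr:lactobounce}, the identity immediately gives $\pointdyck=\lactobounce\circ\posettolac$, a composition of bijections, so $\pointdyck$ is a bijection from $\posets_n$ to $\dyck_n$ (in particular it does land in $\dyck_n$).

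To prove the identity, I would fix $T\in\trees_n$, set $P=([n],\preceq_S)=\lactoposet(T)$ with $S=\{x_1<\cdots<x_n\}$ the set built in \Cref{constr:lactoposet}, and write $u_i$ for the non-root node of $T$ with $x_{u_i}=x_i$. I would then compare the two Dyck paths $\pointdyck(P)$ and $\lactobounce(T)$ through their numbers of north steps at each $x$-coordinate. On the $\lactobounce$ side, note first that since the integer part of $x_u$ in \eqref{eq:node-number} is $d(u)$ and, within a fixed depth, $x_u$ decreases from left to right, the list $u_1,\dots,u_n$ is exactly the list of non-root nodes ordered by increasing depth and then from right to left — the order used in \Cref{constr:lactobounce}, with the root playing the role of the $0$-th node. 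Hence $\lactobounce(T)$ has, on $x=0$, as many north steps as the root has children, and on $x=i$ ($1\le i\le n-1$) as many as $u_i$ has children; equivalently, each non-root node $v$ of $T$ accounts for exactly one north step of $\lactobounce(T)$, placed at the $x$-coordinate equal to the rank of its parent (the root having rank $0$).

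On the $\pointdyck$ side, one first checks that the particular $S$ from \Cref{constr:lactoposet} already satisfies $S\cap S^+=\varnothing$ (an equality $x_u=x_w+1$ would force $d(u)=d(w)+1$ and equal base-$(m+2)$ fractional parts, impossible because the last digit $c(u_{d(u)})$ of $x_u$ is nonzero), so it may be used in \Cref{constr:pointdyck}. The north step that $\pointdyck(P)$ produces for $x_i\in S$ is preceded exactly by the east steps produced for the elements of $S^+$ lying below $x_i$, so it sits on $x=|\{j:x_j+1<x_i\}|=|\{j:x_j<x_i-1\}|$. The key observation is that, $S$ being sorted, the set $\{j:x_j<x_i-1\}$ is an initial segment $\{1,\dots,p_i\}$ of $[n]$ (empty if there is no such $j$), so this $x$-coordinate equals $p_i$, the largest index $j$ with $x_j+1<x_i$. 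By \Cref{lem:parent-cond} — or directly from \Cref{constr:posettolac} together with $\posettolac\circ\lactoposet=\id$ from \Cref{prop:lac-poset-bij} — the parent of $u_i$ in $T$ is $u_{p_i}$ (and the root when no such $j$ exists), which has rank $p_i$; so the north step of $\pointdyck(P)$ for $x_i$ sits at the $x$-coordinate equal to the rank of the parent of $u_i$, exactly as for $\lactobounce(T)$.

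It then follows that $\pointdyck(P)$ and $\lactobounce(T)$, both Dyck paths of size $n$, have the same number of north steps at every $x$-coordinate (namely, at $x=c$, the number of non-root nodes of $T$ whose parent has rank $c$), so they coincide, proving $\pointdyck(\lactoposet(T))=\lactobounce(T)$ for every $T$. I expect the only non-routine point to be the identification in the third step: the sortedness of $S$ is exactly what collapses ``number of elements below $i$ in $P$'' to ``rank of the parent of $u_i$ in $T$'', and the part requiring real care is lining up the three orientations in play — the increasing order on $S$, the right-to-left order on siblings used in \Cref{constr:lactoposet} and \Cref{constr:posettolac}, and the breadth-first labelling of \Cref{constr:lactobounce}.
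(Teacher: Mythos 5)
Your proof is correct and follows essentially the same route as the paper's: both arguments compare $\pointdyck(\lactoposet(T))$ and $\lactobounce(T)$ column by column, showing via the parent characterization of \Cref{lem:parent-cond} (equivalently, \Cref{constr:posettolac} plus \Cref{prop:lac-poset-bij}) that the number of north steps on $x=k$ equals the number of children of the $k$-th node of $T$ in the increasing-depth, right-to-left order, and then deduce bijectivity of $\pointdyck$ by composing bijections. Your explicit verification that the set $S$ from \Cref{constr:lactoposet} satisfies $S\cap S^+=\varnothing$ is a small additional detail that the paper leaves implicit.
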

\begin{proof}
  Let $T \in \trees_n$ and $D = \lactobounce(T)$. We take $P = \lactoposet(T) = ([n], \preceq_S) \in \posets_n$ with $S = \{x_1 < \cdots < x_n\}$ given in \Cref{constr:lactoposet}. Let $D' = \pointdyck(P)$. To show that $D' = D$, we first notice that a Dyck path is determined by the number of north steps on each line $x = k$ for $0 \leq k \leq n - 1$. Thus, we only need to show that for each $k$, $D'$ has the same number of north steps on $x=k$ as $D$ has according to \Cref{constr:lactobounce}.

  For the line $x=0$, we observe that the number of elements in $S$ strictly smaller than $x_1 + 1$ must be those with integer part equal to $1$, thus those from the children of the root of $T$. For $1 \leq k \leq n-1$, from \Cref{constr:pointdyck}, we know that the number of north steps on the line $x=k$ is the number of elements $x_i$ such that $x_{k-1} + 1 < x_i < x_k + 1$, as $x_k+1$ corresponds to the $k$-th east step in $D$. Let $u_k$ be the node in $T$ corresponding to $x_k$. By \Cref{lem:parent-cond}, the nodes in $T$ corresponding to such $x_i$ are children of $u_k$, meaning that the total number of north steps on $x=k$ is the number of children of $u_k$. Furthermore, suppose that $u_k$ is of depth $\ell$, then we can write $k = s_\ell - r$ with $0 \leq r < \alpha_\ell$ as in \Cref{constr:lactobounce}. We know that $u_k$ is the $(r+1)$-st node of depth $\ell$ in $T$, as in \Cref{constr:posettolac} the values corresponding to nodes of the same depth are decreasing from left to right. We thus conclude that $D = D'$, and $\pointdyck \circ \lactoposet = \lactobounce$. As both $\lactobounce$ \citep*[Lemma~3.18]{cataland} and $\lactoposet$ (\Cref{prop:lac-poset-bij}) are bijections, we conclude that $\pointdyck$ is also a bijection.
\end{proof}

The second presentation was first defined in \citet*{tangle} for $(3+1)$-free posets, and it takes a simpler form for unit interval posets. A more explicit presentation is given in \citet[Section~2]{tangle-simple}.

\begin{constr} \label{constr:latdyck}
  For $n \geq 1$, let $D \in \dyck_n$ be a Dyck path of size $n$. Its \tdef{area vector}, denoted by $\Area(D)$, is a vector $(a_1, \ldots, a_n)$ with $a_i$ the number of full unit squares with the upper edge on $y = i$ between $D$ and the diagonal $y = x$. Such area vectors are characterized by the conditions $a_1=0$ and $a_i \leq a_{i-1} + 1$ for $2 \leq i \leq n$. It is clear that the area vector determines the Dyck path. We then define a poset $P = ([n], \preceq)$ by taking $i \prec j$ for $i, j \in [n]$ such that
  \begin{itemize}
  \item Either $a_i + 2 \leq a_j$;
  \item Or $a_i + 1 = a_j$ and $i < j$.
  \end{itemize}
  We define $\latdyck(D) = P$, and its inverse $\latdyck^{-1}(P) = D$. See \Cref{fig:pointdyck-latdyck} for an example.
\end{constr}

The following result ensures that $\latdyck$ is well-defined. It is a special case of \citet*[Theorem~5.2]{conj-zeta}, which is obtained by combining \citet*[Remark~3.2, Proposition~3.11]{tangle}. A self-contained direct proof of this special case can be found in \citet*[Section~6]{alt-proof}.

\begin{prop}[Special case of {\citet*[Theorem~5.2]{conj-zeta}}] \label{prop:latdyck}
  The map $\latdyck$ is a bijection from Dyck paths to unit interval posets preserving sizes.
\end{prop}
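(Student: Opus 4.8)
The plan is to verify in turn that: (i) the strict relation $\prec$ attached in \Cref{constr:latdyck} to the area vector of a Dyck path $D$ is irreflexive and transitive, so that $\latdyck(D)$ is a poset on $[n]$; (ii) $\latdyck(D)$ is a unit interval poset; and (iii) $\latdyck$ is injective. Once these are in hand, $\latdyck$ is a size-preserving injection between two sets of cardinality $\cat_n$ (\Cref{prop:poset-cat}), hence a bijection. Write $a=(a_1,\dots,a_n)=\Area(D)$ throughout. The one observation used over and over follows at once from \Cref{constr:latdyck}: since $a_j\ge a_i+2$ already forces $i\prec j$, two elements that are \emph{incomparable} in $\latdyck(D)$ satisfy $|a_i-a_j|\le 1$; and whenever $j\prec i$ one has $a_j<a_i$, under either clause. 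Claim (i) is then routine: irreflexivity is clear, antisymmetry is automatic, and for transitivity one checks that in each of the four ways $i\prec j$ and $j\prec k$ can be produced by the two clauses one ends up with $a_k\ge a_i+2$, whence $i\prec k$; this step does not even use that $a$ is an area vector.

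For (ii) I would apply \Cref{unit-interval-order-chara} and rule out $(3+1)$ and $(2+2)$. Suppose $\latdyck(D)$ contains a $(3+1)$: a chain $i\prec j\prec k$ and an element $l$ incomparable to each of $i,j,k$. From the chain, $a_j\ge a_i+1$ and $a_k\ge a_j+1$, so $a_k\ge a_i+2$; combined with $|a_l-a_i|\le 1$ and $|a_l-a_k|\le 1$ this forces $a_l=a_i+1$, $a_k=a_i+2$ and $a_j=a_i+1$. Now each of the pairs $(i,j),(j,k),(i,l),(k,l)$ has $a$-values differing by exactly $1$, so for each of them the comparison in $\latdyck(D)$ is governed solely by the label order (the first clause cannot apply); reading these off yields $i<j$, $j<k$, $l<i$ and $k<l$, a cyclic chain of strict inequalities — impossible. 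A $(2+2)$, say with chains $i\prec j$ and $k\prec l$ and every cross pair incomparable, is excluded the same way: the ``incomparable $\Rightarrow$ $a$-values within $1$'' observation confines all of $a_i,a_j,a_k,a_l$ to $\{p,p+1\}$ with $p=a_i$, a short enumeration of the admissible patterns leaves only $a_i=a_k=p$ and $a_j=a_l=p+1$, and the four resulting label conditions again close up into a cycle $i<j<k<l<i$. This case analysis is the only laborious part of the argument — I expect organizing it to be the main obstacle — although each individual case collapses to a one-line inequality chase.

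For (iii) the key is that the area vector can be recovered from the poset: I claim $a_i=h(i)$, where $h$ is the height function of $\latdyck(D)$, given by $h(i)=0$ for $i$ minimal and $h(i)=1+\max\{h(j):j\prec i\}$ otherwise. The bound $h(i)\le a_i$ is immediate, because $j\prec i$ implies $a_j<a_i$, so the $a$-values strictly increase along any chain ending at $i$ and start from a value $\ge 0$. For $h(i)\ge a_i$ I would induct on $a_i$: if $a_i\ge 1$, then using $a_1=0$ and $a_m\le a_{m-1}+1$ the largest index $k<i$ with $a_k\le a_i-1$ in fact satisfies $a_k=a_i-1$, hence $k\prec i$ by the second clause, and $h(i)\ge 1+h(k)\ge a_i$ by the inductive hypothesis. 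Therefore $\Area(D)$ is determined by $\latdyck(D)$, and since the area vector determines the Dyck path, $\latdyck$ is injective; the counting argument from the first paragraph then completes the proof.
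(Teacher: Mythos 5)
The paper itself does not prove this proposition --- it cites it from \citet*{conj-zeta} and \citet*{alt-proof} --- so your attempt has to stand on its own. Parts (i) and (ii) are correct and complete: the transitivity check (all four clause combinations do yield $a_k \ge a_i + 2$), the observation that incomparability forces $|a_i - a_j| \le 1$, and the two cyclic-inequality contradictions ruling out $(3+1)$ and $(2+2)$ all go through; I verified the case analyses, including the forced pattern $a_i = a_k = p$, $a_j = a_l = p+1$ in the $(2+2)$ case. The recovery $a_i = h(i)$ in part (iii) is also correct as stated.

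The gap is in how part (iii) feeds into the counting argument. Your height-function argument recovers the \emph{sequence} $(a_1,\ldots,a_n)$ from the \emph{labeled} poset $([n],\prec)$, which shows that $D \mapsto ([n],\prec_D)$ is injective into labeled posets on $[n]$. But the counting step needs injectivity into $\posets_n$, the set of unit interval posets \emph{up to isomorphism} --- that is the set whose cardinality is $\cat_n$ (for $n=2$ there are three labeled unit interval posets on $[2]$ but $\cat_2 = 2$). Nothing in your argument rules out that two distinct Dyck paths produce non-identical but isomorphic labeled posets, in which case $\latdyck$ would hit some isomorphism class twice and the cardinality count would not close the argument. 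The height function is an isomorphism invariant, so from the isomorphism class alone you recover only the \emph{multiset} of the $a_i$, and that does not determine $D$: for instance $(0,0,1,1)$ and $(0,1,0,1)$ are distinct valid area vectors with the same multiset. To repair the proof you must show that the isomorphism class determines the sequence itself --- e.g.\ by recovering the relative order of labels within each height level from down-set/up-set inclusions and showing that the adjacent-level interleavings together with the constraint $a_i \le a_{i-1}+1$ pin down the word --- or else prove surjectivity onto isomorphism classes directly. That step is where the real content of the cited theorem lies, and it is missing.
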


\begin{prop}\label{prop:latdyck-steep}
  We have $\latdyck = \lactoposet \circ \steeptolac$.
\end{prop}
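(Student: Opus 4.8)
The plan is to fix a Dyck path $D\in\dyck_n$, put $T=\steeptolac(D)$ (so $D=\lactosteep(T)$), let $m$ be the maximal arity of $T$, and show that $\latdyck(D)$ and $\lactoposet(T)$ become the \emph{same} partial order once both are read off on the non-root nodes of $T$. Concretely, I would list the non-root nodes of $T$ as $w_1,\dots,w_n$ in the order the clockwise contour walk of \Cref{constr:lactosteep} first descends to them --- equivalently, in right-to-left pre-order: visit a node, then recurse on its children from right to left --- so that the $i$-th north step of $D$ is emitted precisely when the walk first reaches $w_i$.

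The first step is to see that $\Area(D)=(a_1,\dots,a_n)$ records depths: $a_i=d(w_i)-1$. Recall that $a_i$ equals the height of $D$ just before its $i$-th north step. During the contour walk each edge is traversed once downward (emitting $\uparrow$) and once upward (emitting $\rightarrow$), so at any moment the number of $\uparrow$'s emitted so far minus the number of $\rightarrow$'s equals the depth of the currently occupied node; applying this just before the $i$-th $\uparrow$, when the walk sits at the parent of $w_i$, gives $a_i=d(w_i)-1$. Transporting \Cref{constr:latdyck} along $i\leftrightarrow w_i$, the poset $\latdyck(D)$ then reads: $w_a\prec w_b$ iff $d(w_b)\ge d(w_a)+2$, or $d(w_b)=d(w_a)+1$ and $a<b$. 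I would next record the elementary tree fact that, when $d(w_b)=d(w_a)+1$, the inequality ``$a<b$'' (i.e.\ $w_a$ precedes $w_b$ in right-to-left pre-order) is equivalent to ``$w_a$ lies weakly to the right of $\mathrm{parent}(w_b)$'' in the planar drawing of $T$ (meaning $w_a=\mathrm{parent}(w_b)$, or $w_a$ and $\mathrm{parent}(w_b)$ are distinct depth-$d(w_a)$ nodes whose deepest common ancestor has its branch toward $w_a$ to the right of its branch toward $\mathrm{parent}(w_b)$): this holds because among two subtrees hanging from a common ancestor, the right one is visited first.

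For $\lactoposet(T)$ I would work directly from \Cref{constr:lactoposet}. Writing $x_u=d(u)+f_u$ with $f_u=\sum_{i=1}^{d(u)}c(u_i)(m+2)^{-i}\in(0,1)$, the relation $x_u+1<x_v$ is equivalent to $d(v)-d(u)-1>f_u-f_v$; since $|f_u-f_v|<1$ this fails for $d(v)\le d(u)$, holds automatically for $d(v)\ge d(u)+2$, and reduces to $f_u<f_v$ when $d(v)=d(u)+1$. In that last case, since every base-$(m+2)$ digit $c(\cdot)$ lies in $\{1,\dots,m\}$, no carrying occurs and the geometric tail beyond a position $j$ stays below the place value $(m+2)^{-j}$ (it is at most $\tfrac{m}{m+1}(m+2)^{-j}$); hence appending the extra digit $c(v_{d(u)+1})\ge 1$ of $f_v$ cannot reverse a strict lexicographic comparison of the length-$d(u)$ prefixes $(c(u_1),\dots,c(u_{d(u)}))$ and $(c(v_1),\dots,c(v_{d(u)}))$, and if those prefixes coincide it strictly increases $f_v$. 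Thus $f_u<f_v$ iff $(c(u_1),\dots,c(u_{d(u)}))\le_{\mathrm{lex}}(c(v_1),\dots,c(v_{d(u)}))$, and since $c(\cdot)$ counts children \emph{from the right}, this says precisely that $u$ lies weakly to the right of $\mathrm{parent}(v)$ --- exactly the condition that describes $\latdyck(D)$ in this case. Putting the three depth cases together, the orders induced by $\latdyck(D)$ and by $\lactoposet(T)=\lactoposet(\steeptolac(D))$ coincide on the node set of $T$, so $\latdyck(D)=\lactoposet(\steeptolac(D))$ in $\posets_n$; as $D$ was arbitrary, $\latdyck=\lactoposet\circ\steeptolac$.

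I expect the main obstacle to be the base-$(m+2)$ bookkeeping in the second step: one must check carefully that the integer part (the depth) strictly dominates the fractional part, that distinct digit strings of equal length compare as real numbers lexicographically, and --- the one genuinely delicate point --- that comparing the length-$d(u)$ string of $u$ with the length-$(d(u)+1)$ string of $v$ collapses cleanly to a comparison of their common prefixes, which is where the geometric-tail estimate is used. The area/contour identity of the first step is essentially classical, but it must be set up with the right-to-left convention of \Cref{constr:lactosteep}: with the left-to-right contour walk the tie-break would come out as ``weakly to the \emph{left} of $\mathrm{parent}$'' while the $\lactoposet$ side is unchanged, and the two descriptions would fail to match.
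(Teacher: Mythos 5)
Your proof is correct and follows essentially the same route as the paper: transport everything to the tree $T=\steeptolac(D)$, identify $a_i$ with $d(w_i)-1$ via the clockwise contour walk, and check that both $\latdyck(D)$ and $\lactoposet(T)$ reduce to the same depth/planarity condition on non-root nodes. The only cosmetic difference is that you verify the comparison $x_u+1<x_v$ directly from the base-$(m+2)$ expansion, whereas the paper instead invokes \Cref{lem:parent-cond} together with the left-to-right monotonicity of $x_u$ at fixed depth.
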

\begin{proof}
  Let $P = \latdyck(D)$, $T = \steeptolac(D)$ and $P' = \lactoposet(D)$. We write $P = ([n], \preceq)$ and $P' = ([n], \preceq_S)$, with $S$ given in \Cref{constr:lactoposet}. We show that $P$ is isomorphic to $P'$ after an implicit relabeling.

  For $i \in [n]$, let $u_i$ be the $i$-th non-root node in $T$ that is visited in the clockwise contour walk of $T$. We note that this order on nodes of $T$ is different from the one used before, for instance in \Cref{constr:posettolac}. Suppose that $\Area(D) = (a_1, \ldots, a_n)$. By \Cref{constr:lactosteep}, the depth of $u_i$ is $a_i + 1$. We now define a partial order $\preceq_T$ on non-root nodes in $T$ by taking $u_i \preceq_T u_j$ if and only if $i \preceq j$ in $P$. We recall that the depth of $v$ in $T$ is denoted by $d(v)$. By \Cref{constr:latdyck}, two non-root nodes $v, w$ in $T$ satisfy $v \prec_T w$ if and only if
  \begin{itemize}
  \item Either $d(v) + 2 \leq d(w)$;
  \item Or $d(v) + 1 = d(w)$, and the parent of $w$ is either $v$ or on the left of $v$.
  \end{itemize}
  Now, for $v$ a non-root node in $T$, we take $x_v$ defined in \eqref{eq:node-number}. Suppose that there are two non-root nodes $v, w$ of $T$ such that $x_v + 1 < x_w$. Then we have $d(v) + 1 \leq d(w)$. There are two possibilities: either $d(v) + 1 = d(w)$, or $d(v) + 2 \leq d(w)$. In the first case, let $w'$ be the parent of $w$, we have $x_{w'} + 1 < x_w$. By \Cref{lem:parent-cond}, we know that $x_v \leq x_{w'}$. From \eqref{eq:node-number}, we know that either $v = w'$ or $v$ is on the right of $w'$, as $x_v$ is decreasing from left to right for nodes in $T$ of the same depth. Combining with the second case, we conclude that $x_v + 1 < x_w$ if and only if $v \prec_T w$. By \Cref{constr:lactoposet}, we have $P \cong P'$, which concludes the proof.
\end{proof}

\begin{thm}[{\citet*[Conjecture~7.1]{conj-zeta}}] \label{thm:main-zeta}
  We have $\pointdyck \circ \latdyck = \zeta$.
\end{thm}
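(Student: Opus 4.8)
The plan is to assemble the identity directly from the three bijective comparisons already in place, since each of $\pointdyck$, $\latdyck$, and $\zeta$ has been factored through the intermediate world of plane trees. Concretely, \Cref{prop:latdyck-steep} gives $\latdyck = \lactoposet \circ \steeptolac$, so that $\pointdyck \circ \latdyck = \pointdyck \circ \lactoposet \circ \steeptolac$. Now \Cref{prop:pointdyck-bounce} tells us $\pointdyck \circ \lactoposet = \lactobounce$, hence $\pointdyck \circ \latdyck = \lactobounce \circ \steeptolac$. Finally \Cref{prop:zeta-map} identifies $\lactobounce \circ \steeptolac$ with the zeta map $\zeta$, which closes the argument. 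No issue of well-definedness or domain matching arises, since all the maps involved are size-preserving bijections between the relevant Catalan families: $\steeptolac$ sends $\dyck_n$ to $\trees_n$, $\lactoposet$ sends $\trees_n$ to $\posets_n$, and $\pointdyck$ sends $\posets_n$ back to $\dyck_n$.

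In other words, at this point the theorem is a formal consequence of the preceding propositions, so the real work — and hence the true obstacle — lies entirely in establishing \Cref{prop:pointdyck-bounce} and \Cref{prop:latdyck-steep}, both of which have already been carried out above. The key technical device is \Cref{lem:parent-cond}, which translates the parent relation in a plane tree $T$ into the elementary order-theoretic statement that $x_v$ is the largest element of $S$ below $1 + x_u$; this is what lets one match, coordinate by coordinate, the number of north steps of $\pointdyck(\lactoposet(T))$ on each vertical line $x = k$ with the number of children of the appropriate breadth-first node of $T$, as prescribed by \Cref{constr:lactobounce}. Likewise, the comparison underlying \Cref{prop:latdyck-steep} hinges on the fact that the depth of the $i$-th node visited in the clockwise contour walk equals $a_i + 1$, where $(a_1, \dots, a_n) = \Area(D)$, which must be reconciled with the covering conditions of \Cref{constr:latdyck} through the base-$(m+2)$ encoding of \eqref{eq:node-number}.

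Once these two diagrams are known to commute, \Cref{thm:main-zeta} follows by the three-step chase above, which can be recorded succinctly as the commuting diagram
\[
\dyck_n \xrightarrow{\ \steeptolac\ } \trees_n \xrightarrow{\ \lactoposet\ } \posets_n \xrightarrow{\ \pointdyck\ } \dyck_n,
\]
whose composite along the last two arrows is $\latdyck$ and whose total composite is $\zeta$. One could in principle attempt a self-contained direct verification comparing $\pointdyck(\latdyck(D))$ with $\zeta(D)$ step by step, but this would amount to re-deriving \Cref{prop:pointdyck-bounce} and \Cref{prop:latdyck-steep} simultaneously, so factoring through plane trees is the more economical route and is the one I would take.
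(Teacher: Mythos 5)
Your proof is correct and is exactly the paper's argument: substitute $\latdyck = \lactoposet \circ \steeptolac$ (Proposition~\ref{prop:latdyck-steep}), apply $\pointdyck \circ \lactoposet = \lactobounce$ (Proposition~\ref{prop:pointdyck-bounce}), and conclude with $\zeta = \lactobounce \circ \steeptolac$ (Proposition~\ref{prop:zeta-map}). One trivial slip: in your closing diagram it is the \emph{first} two arrows ($\lactoposet \circ \steeptolac$), not the last two, whose composite equals $\latdyck$.
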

\begin{proof}
  Combining \Cref{prop:pointdyck-bounce}~and~\ref{prop:latdyck-steep}, we have $\pointdyck \circ \latdyck = \lactobounce \circ \steeptolac$, and we conclude by \Cref{prop:zeta-map}.
\end{proof}

\bibliographystyle{abbrvnat}
\bibliography{unit-interval-order}

\end{document}